\definecolor{verylight}{gray}{0.97}
\definecolor{light}{gray}{0.9}
\definecolor{medium}{gray}{0.85}
\definecolor{dark}{gray}{0.6}
\def\NZQ{\Bbb}               % the font for N,Z,Q,R,C
\def\NN{{\NZQ N}}
\def\frk{\frak}               % font for "Fraktur"
\def\Phi{{\frk n}}
\def\Phi{{\frk N}}
\def\MI{{\mathcal I}}
\def\MT{{\mathcal T}}
\def\MS{{\mathcal S}}
\def\opn#1#2{\def#1{\operatorname{#2}}} % to make operators
\opn\chara{char} \opn\length{\ell} \opn\pd{pd} \opn\rk{rk}
\opn\projdim{proj\,dim} \opn\injdim{inj\,dim} \opn\rank{rank}
\opn\depth{depth} \opn\grade{grade} \opn\height{height}
\opn\embdim{emb\,dim} \opn\codim{codim}
\opn\Tr{Tr} \opn\bigrank{big\,rank}
\opn\superheight{superheight}\opn\lcm{lcm}
\opn\trdeg{tr\,deg}%\emph{
\opn\reg{reg} \opn\lreg{lreg} \opn\ini{in} \opn\lpd{lpd}
\opn\size{size}\opn\bigsize{bigsize}
\opn\cosize{cosize}\opn\bigcosize{bigcosize}
\opn\sdepth{sdepth}\opn\sreg{sreg}
\opn\link{link}\opn\fdepth{fdepth}
\opn\div{div} \opn\Div{Div} \opn\cl{cl} \opn\Cl{Cl}
\opn\Spec{Spec} \opn\Supp{Supp} \opn\supp{supp} \opn\Sing{Sing}
\opn\Ass{Ass} \opn\Min{Min}\opn\Mon{Mon} \opn\dstab{dstab} \opn\astab{astab}
\opn\Syz{Syz}
\opn\Ann{Ann} \opn\Rad{Rad} \opn\Soc{Soc}
\opn\Im{Im} \opn\Ker{Ker} \opn\Coker{Coker} \opn\Am{Am}
\opn\Hom{Hom} \opn\Tor{Tor} \opn\Ext{Ext} \opn\End{End}
\opn\Aut{Aut} \opn\id{id}
\opn\nat{nat}
\opn\pff{pf}%   \pf exists already
\opn\Pf{Pf} \opn\GL{GL} \opn\SL{SL} \opn\mod{mod} \opn\ord{ord}
\opn\Gin{Gin} \opn\Hilb{Hilb}\opn\sort{sort}
\opn\aff{aff} \opn\con{conv} \opn\relint{relint} \opn\st{st}
\opn\lk{lk} \opn\cn{cn} \opn\core{core} \opn\vol{vol}
\opn\link{link} \opn\star{star}\opn\lex{lex}
\opn\gr{gr}
\def\pot#1#2{#1[\kern-0.28ex[#2]\kern-0.28ex]}
\opn\dirlim{\underrightarrow{\lim}}
\opn\inivlim{\underleftarrow{\lim}}
\let\iso=\cong
\let\to=\rightarrow
\def\Implies{\ifmmode\Longrightarrow \else
        \unskip${}\Longrightarrow{}$\ignorespaces\fi}
\def\implies{\ifmmode\Rightarrow \else
        \unskip${}\Rightarrow{}$\ignorespaces\fi}
\def\iff{\ifmmode\Longleftrightarrow \else
        \unskip${}\Longleftrightarrow{}$\ignorespaces\fi}
\newtheorem{Theorem}{Theorem}[section]
 \newtheorem{Corollary}[Theorem]{Corollary}
\let\epsilon\varepsilon
\let\kappa=\varkappa
\def\qed{\ifhmode\textqed\fi
      \ifmmode\ifinner\quad\qedsymbol\else\dispqed\fi\fi}
\def\textqed{\unskip\nobreak\penalty50
       \hskip2em\hbox{}\nobreak\hfil\qedsymbol
       \parfillskip=0pt \finalhyphendemerits=0}
\def\dispqed{\rlap{\qquad\qedsymbol}}
\opn\dis{dis}
\def\pnt{{\raise0.5mm\hbox{\large\bf.}}}
\opn\Lex{Lex}
\begin{document}
 \title {A note on the regularity of Hibi rings}

 \author {Viviana Ene, J\"urgen Herzog and Sara Saeedi Madani}

\address{Viviana Ene, Faculty of Mathematics and Computer Science, Ovidius University, Bd.\ Mamaia 124,
 900527 Constanta, Romania, and
 \newline
 \indent Simion Stoilow Institute of Mathematics of the Romanian Academy, Research group of the project  ID-PCE-2011-1023,
 P.O.Box 1-764, Bucharest 014700, Romania} \email{vivian@univ-ovidius.ro}

\address{J\"urgen Herzog, Fachbereich Mathematik, Universit\"at Duisburg-Essen, Campus Essen, 45117
Essen, Germany} \email{juergen.herzog@uni-essen.de}

\address{Sara Saeedi Madani, Department of Pure Mathematics, Faculty of Mathematics and Computer Science, Amirkabir University
of Technology (Tehran Polytechnic), 424, Hafez Ave., Tehran 15914, Iran, and
\newline
 \indent School of Mathematics,
Institute for Research in Fundamental Sciences (IPM), P.O. Box 19395-5746, Tehran, Iran} \email{sarasaeedi@aut.ac.ir}

\thanks{The first author was supported by the grant UEFISCDI,  PN-II-ID-PCE- 2011-3-1023.}
\thanks{The paper was written while the third author was visiting the Department of Mathematics of University Duisburg-Essen. She wants to express her thanks for its hospitality.}

 \begin{abstract}
 We compute the regularity of the Hibi ring of any finite distributive lattice in terms of its poset of join irreducible elements.
 \end{abstract}

\thanks{}
\subjclass[2010]{Primary 05E40, 16E05; Secondary 06D99, 06A11.}
\keywords{Distributive lattices, Hibi rings, regularity.}

 \maketitle

\section*{Introduction}
Let $P$ be a finite poset. The  set of poset ideals $L=\MI(P)$, partially ordered by inclusion, is a distributive lattice. According to a classical result of Birkhoff \cite{B}  any finite distributive lattice arises in this way. Now given a field $K$, there is naturally attached to $L$ the $K$-algebra $K[L]$  generated  over $K$ by the elements of $L$ with  defining relations $\alpha\beta-(\alpha\wedge \beta)(\alpha\vee \beta)$ with  $\alpha,\beta\in L$ incomparable. This algebra was introduced by Hibi \cite{Hibi} in 1987 where he showed that $K[L]$ is a Cohen--Macaulay domain with an ASL structure. He also characterized those distributive lattices for which $K[L]$ is Gorenstein. Nowadays $K[L]$ is called the Hibi ring of $L$.

By choosing for each $\alpha\in L$ an indeterminate  $x_\alpha$ one  obtains the presentation $K[L]\iso S/I_L$ where $S$ is the polynomial ring over $K$ in the indeterminates $x_\alpha$ and where $I_L$  is generated by the quadratic binomials $x_\alpha x_\beta- x_{\alpha\wedge \beta}x_{\alpha\vee \beta}$ with $\alpha,\beta\in L$ incomparable. Not so much is known about the graded minimal free $S$-resolution of the toric ideal $I_L$. Of course we know its projective dimension. Indeed, since $K[L]$ is Cohen-Macaulay and since $\dim K[L]$ is known to be equal to $|P|+1$, the Auslander-Buchsbaum formula implies that $\projdim I_L=|L|-|P|-2$. An equally important invariant of a graded module $M$ over a polynomial ring is its Castelnuovo--Mumford  regularity which may be computed in terms of the shifts of the graded minimal free resolution of $M$ and which is denoted by $\reg M$. As a main result of this paper we show that $\reg I_L=|P|-\rank P$. As a consequence we obtain the formula as given in \cite{ERQ} for the regularity of $I_L$ for any planar distributive lattice $L$.  Our result also provides a simple proof for the classification of the distributive lattices for which $I_L$ has a linear resolution, see \cite[Theorem~3.2]{EHH} and \cite[Corollary~10]{ERQ}, and of those lattices for which $I_L$ is extremal Gorenstein, see \cite[Theorem~3.5]{ERQ}.

\section{The regularity of $K[L]$}

Let $P$ be a finite poset. A subset $\alpha\subset P$ is called a {\em poset ideal} of $P$ if whenever $p\in\alpha$ and $q\leq p$, then $q\in \alpha$. We denote by $\MI(P)$ the set of poset ideals of $P$. Note that $\MI(P)$ with the partial order given by inclusion and with union and intersection as join and meet operation is a distributive lattice. Birkhoff's fundamental theorem asserts that any finite distributive lattice $(L,\wedge, \vee)$ arises in this way. To be precise, $L\iso \MI(P)$ where  $P$ is  the subposet of $L$ consisting of all join irreducible elements of $L$. Recall that $\alpha\in L$ is called {\em join  irreducible} if $\alpha\neq \min L$ and  whenever $\alpha=\beta\vee \gamma$, then $\alpha=\beta$ or $\alpha=\gamma$.

Due to this theorem, we may from now on assume $L=\MI(P)$ for some poset $P$. This point of view allows us to interpret  $K[L]$ as a toric ring. Indeed, let $S$ be the polynomial ring over $K$ in the variables $x_\alpha$ with $\alpha\in L$, and let $T$ be the polynomial ring over $K$ in the variables $s$ and $t_p$ with $p\in P$. We consider the $K$-algebra homomorphism $\varphi\: S\to T$ with $\varphi(x_\alpha)=s\prod_{p\in \alpha}t_p$. It is shown in \cite{Hibi} that $I_L=\Ker\varphi$. Thus we see that
\[
K[L]\iso K[\{s\prod_{p\in \alpha}t_p\: \alpha\in L\}]\subset T.
\]
We henceforth identify $K[L]$  with $K[\{s\prod_{p\in \alpha}t_p\: \alpha\in L\}]$.
In \cite[(3.2)]{Hibi} Hibi describes the monomial $K$-basis of $K[L]$: let $\hat{P}$ be the poset obtained from $P$ by adding the  elements $-\infty$ and $\infty$ with $\infty >p$ and $-\infty<p$ for all $p\in P$, and let  $\MS(\hat{P})$ be the set of integer valued functions
\[
v\:\ \hat{P}\to \NN
\]
with $v(\infty)=0$ and $v(p)\leq v(q)$ for all $p\geq q$. Then the monomials
\begin{eqnarray}
\label{basis}
s^{v(-\infty)}\prod_{p\in P}t_p^{v(p)}, \quad v\in \MS(\hat{P})
\end{eqnarray}
form a $K$-basis of $K[L]$. Note that $K[L]$ is standard graded with
\begin{eqnarray}
\label{deg}
\deg(s^{v(-\infty)}\prod_{p\in P}t_p^{v(p)})= v(-\infty).
\end{eqnarray}
Let $\omega_L$ be the canonical ideal of $K[L]$. By using a result of Stanley \cite[pg.\ 82]{St},  Hibi shows in \cite[(3.3)]{Hibi} that the monomials
\begin{eqnarray}
\label{strict}
s^{v(-\infty)}\prod_{p\in P}t_p^{v(p)}, \quad v\in \MT(\hat{P})
\end{eqnarray}
form a $K$-basis of $\omega_L$,  where $ \MT(\hat{P})$ is the set of integer valued functions $v\:\ \hat{P}\to \NN$ with  $v(\infty)=0$ and $v(p)< v(q)$ for all $p> q$.

\medskip
Based on these facts, we are now ready to prove the following

\begin{Theorem}
\label{main}
Let $L$ be a finite distributive lattice and $P$ the poset of join irreducible elements of $L$. Then
\[
\reg I_L=|P|-\rank P.
\]
\end{Theorem}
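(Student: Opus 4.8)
The plan is to reduce the computation of $\reg I_L$ to a purely combinatorial count on $\hat P$, by combining the Cohen--Macaulayness of $K[L]$ with Hibi's explicit monomial basis of the canonical module. First I would use the standard identity $\reg I_L=\reg(S/I_L)+1=\reg K[L]+1$, so that it suffices to prove $\reg K[L]=|P|-\rank P-1$. Since $K[L]$ is Cohen--Macaulay of dimension $d=|P|+1$, its only nonvanishing local cohomology is $\Coh{d}{K[L]}$, and therefore $\reg K[L]=a(K[L])+d$, where $a(K[L])=\text{end}\,\Coh{d}{K[L]}$ is the $a$-invariant (the top nonvanishing degree of the top local cohomology). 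Graded local duality identifies $\Coh{d}{K[L]}$ with the graded $K$-dual of $\omega_L$, so $\text{end}\,\Coh{d}{K[L]}=-\text{indeg}\,\omega_L$, where $\text{indeg}$ denotes the least degree in which a module is nonzero. Combining these, $\reg K[L]=d-\text{indeg}\,\omega_L$, and everything comes down to computing $\text{indeg}\,\omega_L$.

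For this I would read off the initial degree directly from Hibi's basis (\ref{strict}) of $\omega_L$. By (\ref{deg}) the basis monomial attached to $v\in\MT(\hat P)$ has degree $v(-\infty)$, whence
\[
\text{indeg}\,\omega_L=\min\{\,v(-\infty):v\in\MT(\hat P)\,\}.
\]
I claim this minimum equals $\rank P+2$, the length of a longest chain from $-\infty$ to $\infty$ in $\hat P$. The lower bound is immediate: for any fixed $v$, along a chain $-\infty=q_0<q_1<\cdots<q_k=\infty$ the defining condition $v(p)<v(q)$ for $p>q$ gives $0=v(\infty)<v(q_{k-1})<\cdots<v(-\infty)$, forcing $v(-\infty)\geq k$; choosing a chain of maximal length yields $v(-\infty)\geq\rank P+2$. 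For the reverse inequality I would exhibit an explicit minimizer, namely the function assigning to each $p\in\hat P$ the length of a longest chain in $\hat P$ from $p$ up to $\infty$; one checks this lies in $\MT(\hat P)$ and takes the value $\rank P+2$ at $-\infty$.

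Substituting $\text{indeg}\,\omega_L=\rank P+2$ and $d=|P|+1$ gives $\reg K[L]=(|P|+1)-(\rank P+2)=|P|-\rank P-1$, hence $\reg I_L=|P|-\rank P$, as desired. The main obstacle is the combinatorial step: one must verify that the chain-length function really is strictly decreasing across \emph{every} relation $p>q$ of $\hat P$ (so that it genuinely belongs to $\MT(\hat P)$), and that no admissible function undercuts the value $\rank P+2$ at $-\infty$. By contrast, the duality reduction is routine, since the Cohen--Macaulayness of $K[L]$ and the dimension formula $\dim K[L]=|P|+1$ are already recorded in the excerpt.
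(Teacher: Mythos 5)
Your proposal is correct and follows essentially the same route as the paper: reduce to the $a$-invariant via Cohen--Macaulayness, identify $-a(K[L])$ with the initial degree of $\omega_L$, bound $v(-\infty)$ from below along a maximal chain of $\hat P$, and realize the bound with the longest-chain-to-$\infty$ function (the paper's depth function $\delta$). The only cosmetic difference is that you phrase the regularity/$a$-invariant link through local cohomology and local duality, while the paper reads it off the degree of the Hilbert series; these are equivalent and both are cited in the text.
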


\begin{proof}
Let $H_{K[L]}(t)$ be the Hilbert series of $K[L]$. Then
\[
H_{K[L]}(t)=\frac{Q(t)}{(1-t)^d},
\]
where $Q(t)=\sum_ih_it^i$ is a polynomial and where $d=|P|+1$ is the Krull dimension of $K[L]$. Since $K[L]$ is Cohen-Macaulay, it follows that $\reg K[L]= \deg Q(t)$.

The $a$-invariant $a(K[L])$ of $K[L]$ is defined to be the degree of the Hilbert series of $K[L]$ (see \cite[Def. 4.4.4]{BHbook}) which by definition is equal to $\deg Q(t)-d$. Thus we see that
\begin{eqnarray}
\label{ainvariant}
\reg  I_L= \reg K[L] +1= a(K[L])+|P|+2.
\end{eqnarray}
On the other hand, following Goto and Watanabe \cite{GW},  who introduced the $a$-invariant,   we have
\[
\label{also}
a(K[L])=-\min\{i\: (\omega_L)_i\neq 0\},
\]
see \cite[Def. 3.6.13]{BHbook}. Thus, since $\rank \hat{P}=\rank P+2$,  the desired formula for the regularity of $K[L]$ follows from (\ref{ainvariant}) once we have shown that $\min\{i\: (\omega_L)_i\neq 0\}=\rank \hat{P}$.

Let $v\in \MT(\hat{P})$ and let $-\infty <p_1< \cdots < p_r<\infty$ be a maximal chain in $\hat{P}$ with $r=\rank P+1.$ Then
\[
0 <v(p_r)<v(p_{r-1})<\cdots < v(p_1)<v(-\infty).
\]
It follows that $v(-\infty)\geq \rank \hat{P}$, and hence (\ref{strict}) implies that  $\min\{i\: (\omega_L)_i\neq 0\}\geq \rank \hat{P}$. In order to prove equality, we consider the depth  function $\delta \: \hat{P}\to \NN$ which  for $p\in \hat{P}$  is defined to be the supremum of the lengths of chains ascending from $p$. Obviously, $\delta\in \MT(\hat{P})$ and $\delta(-\infty)=\rank \hat{P}$. This concludes the proof of the theorem.
\end{proof}

 Recall that $L=\MI(P)$ is called {\em simple} if there is no  $p \in P$ with the property that for every $q \in P$  either $q \leq p$ or
$q \geq p$. In the further discussions we may assume without any restrictions that $L$ is simple, because if we consider the subposet  $P'$ of $P$ which is obtained by removing a vertex $p\in P$ which is comparable with any other vertex of $P$ and let $L'=\MI(P')$,  then $I_L$ and $I_{L'}$ have the same  regularity. Indeed, $|P'|=|P|-1$, and since any maximal chain of $P$ passes through $p$, it also follows that $\rank P'=\rank P-1$. Thus the assertion follows from Theorem~\ref{main}.

\medskip
As an immediate consequence of Theorem~\ref{main}, we get the following characterization of simple distributive lattices whose Hibi rings have linear resolutions, previously obtained in \cite{EHH} and \cite{ERQ}.

\begin{Corollary}
\label{linear resolution}
Let $L$ be a finite simple distributive lattice and $P$ the poset of join irreducible elements of $L$. Then $I_L$ has a linear resolution if and
only if $P$ is the sum  of a chain and an isolated element.
\end{Corollary}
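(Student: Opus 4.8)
The plan is to reduce the statement to an arithmetic condition on $P$ via Theorem~\ref{main} and then read off its combinatorial meaning. First I would observe that $I_L$ is generated in degree $2$, namely by the quadratic binomials $x_\alpha x_\beta - x_{\alpha\wedge\beta}x_{\alpha\vee\beta}$. For a graded ideal generated in a single degree $d$, having a linear resolution is equivalent to $\reg I=d$: minimality of the resolution forces the $i$-th syzygies to sit in degree $\geq d+i$, while $\reg I=d$ forces them into degree $\leq d+i$, so the two bounds coincide and the resolution is $d$-linear. Hence $I_L$ has a linear resolution if and only if $\reg I_L=2$.

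Next I would invoke Theorem~\ref{main}, which gives $\reg I_L=|P|-\rank P$, so that the linear resolution property is equivalent to $|P|-\rank P=2$. To interpret $\rank P$, recall from the proof of Theorem~\ref{main} that a maximal chain $p_1<\cdots<p_r$ of $P$ has $r=\rank P+1$ elements; writing $c$ for the maximal number of elements in a chain of $P$, we thus have $\rank P=c-1$. Therefore the condition becomes $|P|=c+1$, i.e.\ the poset $P$ has exactly one element more than its longest chain.

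For the easy direction, if $P$ is the sum of a chain $C$ and an isolated element $q$, then the longest chain is $C$ itself, so $c=|C|$ and $|P|=|C|+1=c+1$, whence $\reg I_L=2$. For the converse, suppose $P$ is simple with $|P|=c+1$, and fix a longest chain $C=\{p_1<\cdots<p_c\}$; then there is exactly one element $q\notin C$. Here I would use simplicity crucially: since any two elements of $C$ are comparable, the only element that could be incomparable to a given $p_i$ is $q$, yet simplicity demands that each $p_i$ possess such an incomparable partner. Hence $q$ is incomparable to every $p_i$, i.e.\ $q$ is isolated, and $P=C\sqcup\{q\}$ is the sum of a chain and an isolated element.

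After the reduction to $|P|=c+1$ the combinatorics is elementary, so I do not expect a serious obstacle; the one point that must be handled with care is the equivalence ``generated in a single degree $d$ together with $\reg=d$ is the same as having a linear resolution,'' since it is exactly here that the quadratic generation of $I_L$ is used to turn the numerical statement of Theorem~\ref{main} into a statement about resolutions.
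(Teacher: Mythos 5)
Your proposal is correct and follows essentially the same route as the paper: reduce via Theorem~\ref{main} to the condition $|P|-\rank P=2$, identify the unique element $q$ off a maximal chain, and use simplicity to force $q$ to be isolated (you argue the contrapositive of the paper's ``if $q$ were comparable to some $p_i$, that $p_i$ would be comparable to everything,'' but it is the same argument). The only addition is that you spell out the standard fact that an ideal generated in degree $2$ has a linear resolution iff its regularity equals $2$, which the paper takes for granted.
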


\begin{proof}
The ideal $I_L$ has a linear resolution if and only if $\reg I_L=2$. By Theorem~\ref{main} this is the case if and only if $|P|-\rank P=2$. Say, $\rank P=r$, and let $C=p_0<p_1<\cdots <p_r$ be a maximal chain in $P$. Thus $|P|-\rank P=2$, if and only if there exists a unique  $q\in P$ not belonging to $C$. Suppose $q$ is comparable with some $p_i$. Then $p_i$ is comparable with any other element of $P$, contradiction the assumption that $L$ is simple. Thus if $L$ is simple, then $|P|-\rank P=2$ if and only if $P$ is the sum of the chain $C$ and the isolated element $q$.
\end{proof}

The preceding corollary implies that a finite simple distributive lattice is planar if $I_L$  has a linear resolution. Now let $L$ be any simple planar lattice and $P$ the poset of join irreducible elements of $L$. Then there exist two chains $C_1$ and $C_2$ such that $P$ as a set is the disjoint union of them. We may assume that $|C_1|\geq |C_2|$. It follows from Theorem~\ref{main} that $\reg I_L=|C_2|+1$. This result may also be obtained with the characterization given in \cite[Theorem 4]{ERQ}.

\medskip
We would like to remark that, given a number $k$, Theorem~\ref{main} allows us to determine in a finite number of steps all finite simple distributive lattices $L$ with $\reg I_L=k$. As an example, we consider the case $k=3$. Let $P$ be the poset of join irreducible poset of $L$. By Theorem~\ref{main}, it is enough to find all finite posets $P$ with $|P|-\rank P=3$. Let $C$ be a maximal chain in $P$. Since $|P|=\rank P+3$, it follows that there exist precisely two elements $q,q^\prime\in P$ which do not belong to $C$. The only posets satisfying  $|P|=\rank P+3$ for which $L=\MI(P)$ is  simple are displayed in Figure~\ref{reg=3}.

\begin{figure}[hbt]
\begin{center}
\psset{unit=0.5cm}
\begin{pspicture}(-2,-1)(4,7)
\rput(-12,0){
\rput(0,0){$\bullet$}
\rput(1,0){$\bullet$}
\rput(2,0){$\bullet$}
\rput(2,2){$\bullet$}
\rput(2,4){$\bullet$}
\rput(2,6){$\bullet$}
\psline(2,0)(2,2)
\psline[linestyle=dotted](2,2)(2,4)
\psline(2,4)(2,6)
}
\rput(-7,0){
\rput(0,0){$\bullet$}
\rput(1,0){$\bullet$}
\rput(2,0){$\bullet$}
\rput(2,2){$\bullet$}
\rput(2,4){$\bullet$}
\rput(2,6){$\bullet$}
\psline(2,0)(2,2)
\psline[linestyle=dotted](2,2)(2,4)
\psline(2,4)(2,6)
\psline(1,0)(2,3)
}
\rput(-2,0){
\rput(0,2){$\bullet$}
\rput(0,0){$\bullet$}
\rput(2,0){$\bullet$}
\rput(2,2){$\bullet$}
\rput(2,4){$\bullet$}
\rput(2,6){$\bullet$}
\psline(2,0)(2,2)
\psline[linestyle=dotted](2,2)(2,4)
\psline(2,4)(2,6)
%\psline(0,0)(2,3)
\psline(0,0)(0,2)
}
\rput(3,0){
\rput(0,4){$\bullet$}
\rput(0,0){$\bullet$}
\rput(2,0){$\bullet$}
\rput(2,2){$\bullet$}
\rput(2,4){$\bullet$}
\rput(2,6){$\bullet$}
\psline(2,0)(2,2)
\psline[linestyle=dotted](2,2)(2,4)
\psline(2,4)(2,6)
\psline(0,0)(2,3)
\psline(0,0)(0,4)
}
\rput(8,0){
\rput(0,4){$\bullet$}
\rput(0,0){$\bullet$}
\rput(2,0){$\bullet$}
\rput(2,2){$\bullet$}
\rput(2,4){$\bullet$}
\rput(2,6){$\bullet$}
\psline(2,0)(2,2)
\psline[linestyle=dotted](2,2)(2,4)
\psline(2,4)(2,6)
\psline(0,4)(2,3)
\psline(0,0)(0,4)
}
\rput(13,0){
\rput(0,4){$\bullet$}
\rput(0,0){$\bullet$}
\rput(2,0){$\bullet$}
\rput(2,2){$\bullet$}
\rput(2,4){$\bullet$}
\rput(2,6){$\bullet$}
\psline(2,0)(2,2)
\psline[linestyle=dotted](2,2)(2,4)
\psline(2,4)(2,6)
\psline(0,4)(2,2.5)
\psline(0,0)(2,3.5)
\psline(0,0)(0,4)
}
\end{pspicture}
\end{center}
\caption{}\label{reg=3}
\end{figure}

The Gorenstein ideals $I_L$ with $\reg I_L =3$ are called {\em extremal Gorenstein}. Hibi showed in \cite[pg. 105, d) Corollary]{Hibi} that for any distributive lattice $L$, the ideal $I_L$ is Gorenstein if and only if the poset of join irreducible elements of $L$ is pure. Combining this fact with the above consideration, we recover the result of \cite[Theorem~3.5]{EHH} which says that for a simple distributive lattice $L$, the ideal $I_L$ is extremal Gorenstein if and only if $L$ is one of the lattices shown in Figure~\ref{extremal Gorenstein}.

\begin{figure}[hbt]
\begin{center}
\psset{unit=0.3cm}
\begin{pspicture}(-25.3,-2.5)(4,5)
\rput(5,0){
\psline(-9,3)(-11,1)
%\psline(-9,3)(-9,1)
\psline(-9,3)(-7,1)
\psline(-11,1)(-9,-1)
%\psline(-9,1)(-9,-1)
\psline(-7,1)(-9,-1)
\psline(-9,3)(-11,5)
\psline(-13,3)(-11,5)
\psline(-13,3)(-11,1)
\psline(-11,1)(-13,-1)
\psline(-13,-1)(-11,-3)
\psline(-11,-3)(-9,-1)

\rput(-9,3){$\bullet$}
\rput(-11,1){$\bullet$}
%\rput(-9,1){$\bullet$}
\rput(-7,1){$\bullet$}
\rput(-9,-1){$\bullet$}
\rput(-11,-3){$\bullet$}
\rput(-13,-1){$\bullet$}
\rput(-13,3){$\bullet$}
\rput(-11,5){$\bullet$}
}

\rput(-5,0){
\psline(-9,3)(-11,1)
%\psline(-9,3)(-9,1)
%\psline(-9,3)(-7,1)
\psline(-11,1)(-9,-1)
%\psline(-9,1)(-9,-1)
%\psline(-7,1)(-9,-1)
\psline(-9,3)(-11,5)
\psline(-13,3)(-11,5)
\psline(-13,3)(-11,1)
\psline(-11,1)(-13,-1)
\psline(-13,-1)(-11,-3)
\psline(-11,-3)(-9,-1)

\rput(-9,3){$\bullet$}
\rput(-11,1){$\bullet$}
%\rput(-9,1){$\bullet$}
%\rput(-7,1){$\bullet$}
\rput(-9,-1){$\bullet$}
\rput(-11,-3){$\bullet$}
\rput(-13,-1){$\bullet$}
\rput(-13,3){$\bullet$}
\rput(-11,5){$\bullet$}
}

\rput(18,0){
\psline(-9,3)(-11,1)
%\psline(-9,3)(-9,1)
\psline(-9,3)(-7,1)
\psline(-11,1)(-9,-1)
%\psline(-9,1)(-9,-1)
\psline(-7,1)(-9,-1)
\psline(-9,3)(-11,5)
\psline(-13,3)(-11,5)
\psline(-13,3)(-11,1)
\psline(-11,1)(-13,-1)
\psline(-13,-1)(-11,-3)
\psline(-11,-3)(-9,-1)
\psline(-15,1)(-13,-1)
\psline(-15,1)(-13,3)

\rput(-9,3){$\bullet$}
\rput(-11,1){$\bullet$}
\rput(-15,1){$\bullet$}
\rput(-7,1){$\bullet$}
\rput(-9,-1){$\bullet$}
\rput(-11,-3){$\bullet$}
\rput(-13,-1){$\bullet$}
\rput(-13,3){$\bullet$}
\rput(-11,5){$\bullet$}
}

\rput(-16,0){
\rput(-11,-3){$\bullet$}
\rput(-13,-0.5){$\bullet$}
\rput(-9,-0.5){$\bullet$}
\rput(-11,-0.5){$\bullet$}
\rput(-13,2.5){$\bullet$}
\rput(-9,2.5){$\bullet$}
\rput(-11,2.5){$\bullet$}
\rput(-11,5){$\bullet$}
\psline(-11,-3)(-13,-0.5)
\psline(-11,-3)(-9,-0.5)
\psline(-11,-3)(-11,-0.5)
\psline(-11,5)(-11,2.5)
\psline(-9,-0.5)(-9,2.5)
\psline(-13,-0.5)(-13,2.5)
\psline(-13,-0.5)(-11,2.5)
\psline(-11,2.5)(-9,-0.5)
\psline(-11,-0.5)(-13,2.5)
\psline(-11,-0.5)(-9,2.5)
\psline(-11,5)(-13,2.5)
\psline(-11,5)(-9,2.5)

}
\end{pspicture}
\end{center}
\caption{}\label{extremal Gorenstein}
\end{figure}


\begin{thebibliography}{}


\bibitem{B} G.~Birkhoff, \textit{Lattice Theory} (3rd ed.), Amer. Math. Soc. Colloq. Publ. No. 25.
Providence, R. I.: Amer. Math. Soc.


\bibitem{BHbook} W.~Bruns, J.~Herzog, \textit{Cohen-Macaulay rings}, Revised Ed., Cambridge University Press, 1998.


\bibitem{EHH} V.~Ene, J.~Herzog, T.~Hibi, \textit{Linearly related polyominoes}, preprint, arXiv: 1403.4349v1.


\bibitem{ERQ} V.~Ene, A. A.~Qureshi, A.~Rauf, \textit{Regularity of join-meet ideals of distributive lattices}, Electron. J. Combin. {\bf 20} (3) (2013), \#P20.

\bibitem{GW} S.~Goto, K.-i.~Watanabe, \textit{On graded rings, I}, J. Math. Soc. Japan {\bf 30}(2) (1978), 179--213.


\bibitem{Hibi}
T.~Hibi,
\textit{Distributive lattices, affine semigroup rings and algebras with straightening laws},
In: ``Commutative Algebra and Combinatorics'' (M.~Nagata and H.~Matsumura, Eds.),
Adv. Stud. Pure Math. {\bf 11}, North--Holland, Amsterdam, (1987), 93--109.

\bibitem{Sch} P.~Schenzel, \textit{Uber die freien Aufl\"osungen extremaler Cohen-Macaulay Ringe}, J. Algebra {\bf 64} (1980), 93--101.


\bibitem{St} R.~P.~Stanley, \textit{Hilbert functions of graded algebras}, Adv. Math. {\bf 28} (1978), 57--83.

\end{thebibliography}
\end{document}